               \def\version{18 January, 2011}                   %
\font\tenBbb=msbm10
\font\sevenBbb=msbm7
\font\fiveBbb=msbm5
\newcommand{\R}     {\mathbb{R}}
\newcommand{\Z}     {\mathbb{Z}}
\newcommand{\N}     {\mathbb{N}}
\renewcommand{\P}   {\mathbb{P}}
\newcommand{\E}     {\mathbb{E}}
\newcommand{\ssup}[1] {{\scriptscriptstyle{({#1}})}}
\def\1{{\mathchoice {1\mskip-4mu\mathrm l}      
{1\mskip-4mu\mathrm l}
{1\mskip-4.5mu\mathrm l} {1\mskip-5mu\mathrm l}}}
\def\comment#1{}
\newtheoremstyle{thm}{2ex}{2ex}{\itshape\rmfamily}{}
{\bfseries\rmfamily}{}{1.7ex}{}
\newtheoremstyle{rem}{1.3ex}{1.3ex}{\rmfamily}{}
{\itshape\rmfamily}{}{1.5ex}{}
\newtheorem{theorem}{Theorem}[section]
\newtheorem{lemma}[theorem]{Lemma}
\newtheorem{step}{STEP}
\theoremstyle{definition}
\newtheorem{bem}[theorem] {Remark}
\newcommand{\en}       {\end{equation}}
\newcommand{\eq}       {\begin{equation}}
\newcommand{\eqry}   {\begin{eqnarray}}
\newcommand{\enqry}   {\end{eqnarray}}
\newcommand{\eqarray}   {\begin{eqnarray}}
\newcommand{\enarray}   {\end{eqnarray}}
\newcommand{\eqarraystar} {\begin{eqnarray*}}
\newcommand{\enarraystar} {\end{eqnarray*}}
\newcommand{\bel}{\begin{lemma}}
\newcommand{\el}{\end{lemma}}
\newcommand{\bes}{\begin{step}}
\newcommand{\es}{\end{step}}
\newcommand{\bea}{\begin{array}}
\newcommand{\ea}{\end{array}}
\newcommand{\bpr}{\begin{proof}}
\newcommand{\epr}{\end{proof}}
\newcommand{\norm}{|\!|\!|}
\renewcommand{\section}{\secdef\sct\sect}
\newcommand{\sct}[2][default]{\refstepcounter{section}
\vspace{0.5cm}
\setcounter{equation}{0}
\centerline{ 
\scshape \arabic{section}.\ #1}
\vspace{0.3cm}}
\newcommand{\sect}[1]{
\vspace{0.5cm}
\centerline{\large\scshape #1}
\vspace{0.3cm}}
\renewcommand{\subsection}{\secdef \subsct\sbsect}
\newcommand{\subsct}[2][default]{\refstepcounter{subsection}
\nopagebreak
\vspace{0.5\baselineskip}
{\flushleft\bf \arabic{section}.\arabic{subsection}~\bf #1  }
\nopagebreak}
\newcommand{\sbsect}[1]{\vspace{0.1cm}\noindent
{\bf #1}\vspace{0.1cm}}
\newcommand{\heap}[2]{\genfrac{}{}{0pt}{}{#1}{#2}}
\renewcommand{\subsubsection}{%
\secdef \subsubsect\sbsbsect}
\newcommand{\subsubsect}[2][default]{%
\refstepcounter{subsubsection}
\nopagebreak
\vspace{0.1\baselineskip}
\nopagebreak
{\flushleft
\sffamily\slshape
\arabic{section}.\arabic{subsection}.\arabic{subsubsection}
\ %
\sffamily #1\/.}\ }
\newcommand{\sbsbsect}[1]{\vspace{0.1cm}\noindent
{\bf #1}\ }
\renewcommand{\d}{{\rm d}}
\newcommand{\e}{{\rm e}}
\newcommand{\eps}{\varepsilon}
\newcommand{\Sym}{\mathfrak{S}}
\newcommand{\supp}{{\operatorname {supp}}}
\newcommand{\smfrac}[2]{\textstyle{\frac{#1}{#2}}}
\newcommand{\Ccal}   {{\mathcal C }}
\newcommand{\Fcal}   {{\mathcal F }}
\newcommand{\Ical}   {{\mathcal I }}
\newcommand{\Mcal}   {{\mathcal M }}
\begin{document}

\title[Exponential moments of self-intersection local times]{\large 
Self-intersection local times of random walks:\\ 
Exponential moments in subcritical dimensions}
\author[Mathias Becker and Wolfgang K\"onig]{}
\maketitle
\thispagestyle{empty}
\vspace{-0.5cm}

\centerline{\sc By Mathias Becker$ $\footnote{Weierstrass Institute for Applied Analysis and Stochastics,
Mohrenstr. 39, 10117 Berlin, Germany, {\tt becker@wias-berlin.de} and {\tt koenig@wias-berlin.de}} and Wolfgang K\"onig$ $\footnotemark[1]$^,$\footnote{Technical University Berlin, Str. des 17. Juni 136,
10623 Berlin}}
\vspace{0.4cm}
\centerline{\textit{WIAS Berlin, and WIAS Berlin and TU Berlin}}
\vspace{0.8cm}
\centerline{\textit{\version}}
\vspace{0.8cm}

\begin{quote}{\small {\bf Abstract:} Fix $p>1$, not necessarily integer, with $p(d-2)<d$. We study the $p$-fold self-intersection local time of a simple random walk on the lattice $\Z^d$ up to time $t$. This is the $p$-norm of the vector of the walker's local times, $\ell_t$. We derive precise logarithmic asymptotics of the expectation of $\exp\{\theta_t \|\ell_t\|_p\}$ for scales $\theta_t>0$ that are bounded from above, possibly tending to zero. The speed is identified in terms of mixed powers of $t$ and $\theta_t$, and the precise rate is characterized in terms of a variational formula, which is in close connection to the {\it Gagliardo-Nirenberg inequality}. As a corollary, we obtain a large-deviation principle for $\|\ell_t\|_p/(t r_t)$ for deviation functions $r_t$ satisfying $t r_t\gg\E[\|\ell_t\|_p]$. 
Informally, it turns out that the random walk homogeneously squeezes in a $t$-dependent box with diameter of order $\ll t^{1/d}$ to produce the required amount of self-intersections. Our main tool is an upper bound for the joint density of the local times of the walk.}

\end{quote}

\vfill

\bigskip\noindent
{\it MSC 2000.} 60K37, 60F10, 60J55.

\medskip\noindent
{\it Keywords and phrases.}  Self-intersection local time, upper tail, Donsker-Varadhan large deviations, variational formula, Gagliardo-Nirenberg inequality.


\setcounter{section}{0}
\section{Introduction}\label{Intro}

\noindent In this paper, we give precise logarithmic asymptotics for the exponential moments of self-intersection local times of random walks on $\Z^d$ on various scales. This topic has been studied a lot in the last decade, since it is a natural question, and a rich phenemonology of critical behaviours of the random walk arises, depending on the dimension, the intersection parameter, the scale, and the type of the random process. Furthermore, the question is technically very difficult to handle, due to bad continuity and boundedness properties of the self-intersection local time. A couple of different techniques for studying self-intersections have been introduced yet, which turned out to be more or less fruitful in various situations. 

Self-intersections of random paths are not only particularly fundamental objects in the theory of stochastic processes, but play also a role in Euclidean quantum field theory \cite{V69}, in the study of random polymer models in Physics \cite{F60} and in Chemistry \cite{dG79}. Furthermore, they appear in the study of models of random paths in random media, which are often motivated by Physics. Let us mention as examples the parabolic Anderson model \cite{GK05} (the heat equation with random potential, describing random mass transport through random media), the closely related problem of random walks in random scenery \cite{AC07} (describing random transport through porous media \cite{MM80}) and the Wiener-sausage problem \cite{DV75}, \cite{DV79}. While a random polymer has a self-repellent path interaction (which we do not consider here), a random medium typically imposes a self-attractive interaction, which is the regime under interest here.

In this paper, we introduce a recently developed method to the study of self-intersections, which enables us to derive limits in terms of an explicit variational formula describing the asymptotics; this formula explains the optimal behaviour of the random walk to produce many self-intersections.  We are working in sub-critical dimensions, where this behaviour consists of a homogeneous squeezing of the path over the whole time interval in a box of a certain time-dependent diameter.

Our method is strongly influenced by the celebrated Donsker-Varadhan large-deviations theory. The main obstacle that has to be overcome to make these ideas work is the lack of continuity, and this is serious. To overcome this, we use an explicit upper bound for the joint density of the walker's local times, which has been derived by Brydges, van der Hofstad and K\"onig \cite{BHK07}. The main task left after applying this bound is to identify the scaling limit of the arising formula, and this is the main novelty of the present paper.

\subsection{Self-intersection local time}\label{sec-SILT}

\noindent Let $(S_t)_{t\in[0,\infty)}$ be a simple random walk in continuous time in $\Z^d$ started from the origin.
We denote by $\P$ the underlying probability measure and by $\E$ 
the corresponding expectation. The main object of this paper is the {\em self-intersection local time} of the random walk. 
In order to introduce this object, we need the local times of the 
random walk at time $t>0$,
\begin{equation}\label{loctim}
\ell_t(z)=\int_{0}^{t}\1_{\{S_r=z\}}\,\d r,\qquad \mbox{ for } z\in\Z^d.
\end{equation}
Fix $p\in(1,\infty)$ and consider the $p$-norm of the local times:
\begin{equation}\label{Lambdadef}
\|\ell_t\|_p
=\Big(\sum_{z\in\Z^d}\ell_t(z)^p\Big)^{1/p},\qquad\mbox { for } t>0.
\end{equation}
If $p$ is an integer, then, clearly
\begin{equation}\label{selfinter}
\|\ell_t\|_p^p=\int_0^t\d t_1\dots\int_0^t\d t_p\,\1_{\{S_{t_1}=\dots=S_{t_p}\}}
\end{equation}
is equal to the $p$-fold self-intersection local time 
of the walk, i.e., the amount of time tuples that it spends in $p$-fold self-intersection sites. For $p=2$, this is usually called the {\it self-intersection local time}. For $p=1$, $\|\ell_t\|_p^p$ is 
just the number $t$, and for $p=0$, it is equal to $\#\{S_r\colon r\in[0,t]\}$, the 
{\it range\/} of the walk. It is certainly also of interest to study $\|\ell_t\|_p^p$ for non-integer
values of $p>1$, see for example~\cite{HKM04}, where this received technical importance. 

The typical behaviour of $\|\ell_t\|_p^p$ for continuous-time random walks cannot be found in the literature, to the best of our knowledge, but we have no doubt that it is, up to the value of the prefactor, equal to the behaviour of the self-intersection local time, $\|\ell_n\|_p^p$, of a centred random walk in discrete time. This has been identified as
\begin{equation}\label{typical}
\E[\|\ell_n\|_p^p]\sim  C a_{d,p}(n),\qquad \mbox{where}\qquad a_{d,p}(n)=
\begin{cases}
n^{(p+1)/2}&\mbox{if }d=1,\\
n (\log n)^{p-1}&\mbox{if }d=2,\\
n&\mbox{if }d\geq 3,
\end{cases}
\end{equation}
where
\begin{equation}\label{typicalconst}
C=\begin{cases}
\frac{\Gamma(p+1)}{\left(2\pi \sqrt{\det\Sigma}\right)^{p-1}}&\mbox{if }d= 2,\\
\gamma^2\sum_{j=1}^\infty j^p(1-\gamma)^{1-j}&\mbox{if } d\geq 3,
\end{cases}
\end{equation}
where $\gamma=\P(S_n\not=0\mbox{ for any }n\in\N)$ denotes the escape probability and $\Sigma$ the covariance matrix of the random walk. See \cite{C07} for $d=2$ and \cite{BK07} for $d\geq 3$, but we could find no reference for $d=1$.

\subsection{Main results}\label{sec-results}

\noindent In this paper, we study the behaviour of the random walk when the walker produces extremely many self-intersections. We restrict to the \emph{subcritical} dimensions, where $d(p-1)<2p$. Before we formulate our results, let us informally describe the optimal behaviour to produce many self-intersections in these dimensions. It is a homogeneous self-squeezing strategy: the walker does not leave a box with radius on a particular scale $\alpha_t\ll \sqrt t$ (we write $b_t\ll c_t$ if $\lim_{t\to\infty} c_t/b_t=\infty$), and the sizes of all the local times are on the same scale $t/\alpha_t^d$ within this box. Furthermore, their rescaled shape approximates a certain deterministic profile, which is given in terms of a characteristic variational formula.

In our result, we do not prove this path picture, but we derive precise logarithmic asymptotics, as $t\to\infty$, for the exponential moments of $\theta_t\|\ell_t\|_p$ for various choices of weight functions $\theta_t\in(0,\infty)$ that are bounded from above. As a direct consequence, this leads to asymptotics of the probability of the event $\{\|\ell_t\|_p>tr_t\}$ for various choices of scale functions $r_t\in[0,1]$ satisfying $a_{d,p}(t)\ll tr_t$.

In order to formulate the result, we have to introduce more notation. By $L^q(\R^d)$ we denote the usual Lebesgue space, which is equipped with the norm $\norm\cdot\norm_q$ if $q\geq 1$. (Since the counterplay between functions defined on $\Z^d$ and on $\R^d$ will be crucial in this paper, we decided to distinguish the norm symbols.) The space $H^1(\R^d)$ is the usual Sobolev space. By $\Mcal_1(X)$ we denote the set of probability measures on a metric space $X$ equipped with the Borel sigma field. We regard $\Mcal_1(\Z^d)$ as a subset of $\ell^1(\Z^d)$.

Now we formulate our main result. 

\begin{theorem}[Exponential Moments]\label{uptails2}
Assume that $d\in\N$ and $p>1$.
\begin{itemize}
\item[(i)]For any $\theta>0$,
\begin{equation}\label{DiscreteExpMom}
\lim_{t\rightarrow \infty}\frac{1}{t}\log\E\Big( \e^{\theta\| \ell_t\|_p }\Big)=\rho_{d,p}^{\ssup{\d}}(\theta),
\end{equation}
where
\begin{equation}\label{rhoddef}
\rho_{d,p}^{\ssup{\d}}(\theta)=\sup\left\{\theta\| \mu\|_p-J(\mu)\colon \mu\in \Mcal_1(\Z^d) \right\}\in(0,\theta],
\end{equation}
and $J(\mu)=\frac {1}{2}\sum_{x\sim y}\big(\sqrt{\mu(x)}-\sqrt{\mu(y)}\big)^2$ denotes the Donsker-Varadhan rate functional.

\item[(ii)] Additionally assume that $d(p-1)< 2p$. Define $\lambda=\frac{2p+d-dp}{2p}\in(0,1)$ and let $\left(\theta_t\right)_{t>0}$ be a function in $(0,\infty)$ such that
\begin{equation}\label{speed}
\left(\frac{\log t}{t}\right)^{\frac {2\lambda}{d+2}}\ll\theta_t\ll 1.
\end{equation}
Furthermore, put
\begin{equation}\label{rhocdef}
\rho_{p,d}^{\ssup{\rm c}}(\theta) = \sup\Big\{\theta |\!|\!| g^2|\!|\!|_p-\frac{1}{2}\norm \nabla g\norm_2^2\colon g\in H^1(\R^d), \norm g\norm _2=1\Big\},\qquad\theta>0.
\end{equation}
Then $\rho_{p,d}^{\ssup{\rm c}}(\theta)\in (0,\infty)$, and
\begin{itemize}
\item[(a)]
\begin{equation}\label{ExpMom1}
\frac{1}{t}\log\E\Big( \e^{\theta_t\| \ell_t\|_p}\Big)\geq\theta_t^{1/\lambda}\big(\rho_{d,p}^{\ssup{\rm c}}(1)+o(1)\big),\qquad t\to\infty.
\end{equation}

\item[(b)]If, additionally to $d(p-1)< 2p$, the stronger assumption $d(p-1)< 2$ is fulfilled, then
\begin{equation}\label{ExpMom2}
\frac{1}{t}\log\E\left( e^{\theta_t\| \ell_t\|_p }\right)\leq\theta_t^{1/\lambda}\big(\rho_{d,p}^{\ssup{\rm c}}(1)+o(1)\big),\qquad t\to\infty.
\end{equation}
\end{itemize}

\end{itemize}
\end{theorem}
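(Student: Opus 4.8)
\emph{Strategy of the proof.} The plan is to establish the three assertions by quite different arguments: part~(i) is a comparatively soft consequence of the Donsker--Varadhan change-of-measure machinery together with the density bound of~\cite{BHK07}; the lower bound~\eqref{ExpMom1} comes from a confinement-and-tilting construction plus a scaling analysis of $\rho^{\ssup{\rm c}}_{d,p}$; and the upper bound~\eqref{ExpMom2} is the deep part, resting on~\cite{BHK07} and on identifying the scaling limit of a discrete variational problem.

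\emph{Part (i).} Write $\|\ell_t\|_p=t\,\|L_t\|_p$ with $L_t=t^{-1}\ell_t\in\Mcal_1(\Z^d)$. For the lower bound I would pick a finitely supported $\eps$-maximiser $\mu$ of~\eqref{rhoddef}, tilt the walk (by an $h$-transform/Feynman--Kac with $h=\sqrt\mu$) so that under the new law $L_t\to\mu$ in probability and the relative entropy over $[0,t]$ is $tJ(\mu)(1+o(1))$, and restrict $\E(\e^{\theta\|\ell_t\|_p})$ to $\{L_t\approx\mu\}$; this gives $\liminf_t\frac1t\log\E(\e^{\theta\|\ell_t\|_p})\ge\theta\|\mu\|_p-J(\mu)$, hence $\ge\rho^{\ssup{\d}}_{d,p}(\theta)$. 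Positivity of $\rho^{\ssup{\d}}_{d,p}(\theta)$ follows by testing~\eqref{rhoddef} against a suitably spread-out measure, while $\rho^{\ssup{\d}}_{d,p}(\theta)\le\theta$ is immediate from $\|\mu\|_p\le\|\mu\|_1=1$ and $J\ge0$. For the matching upper bound I would periodise: folding $\Z^d$ onto a torus $\Z^d/(R\Z^d)$ only increases the local times, hence $\|\ell_t\|_p$, so $\E(\e^{\theta\|\ell_t\|_p})$ is dominated by the corresponding expectation for the torus walk, which is a functional of the local times of a \emph{finite}-state chain; to these the density bound of~\cite{BHK07} applies, and after substituting $\ell=t\mu$ the resulting integral over the simplex equals $\e^{t\sup\{\theta\|\mu\|_p-J_R(\mu)\}}$ times a polynomial in $t$ and lower-order (Bessel-type) corrections. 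Sending $t\to\infty$ for fixed $R$ and then $R\to\infty$, and using $\sup_{\Mcal_1(\Z^d/(R\Z^d))}\{\theta\|\mu\|_p-J_R(\mu)\}\to\rho^{\ssup{\d}}_{d,p}(\theta)$, yields the upper bound in~\eqref{DiscreteExpMom}.

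\emph{Part (ii)(a).} Now $\theta_t\to0$, and the optimal occupation measure should spread over a box of diameter $\alpha_t:=\theta_t^{-1/(2\lambda)}$; since $2\lambda=2-d(p-1)/p>0$ under $d(p-1)<2p$, this scale is well defined and tends to infinity. Fixing $g\in H^1(\R^d)$ with $\norm g\norm_2=1$, I would bound $\E(\e^{\theta_t\|\ell_t\|_p})$ below by the contribution of occupation measures close to $z\mapsto\alpha_t^{-d}g(z/\alpha_t)^2$: on such an event $\|\ell_t\|_p\approx t\,\alpha_t^{-d(p-1)/p}\norm g^2\norm_p$, while the cost is the Donsker--Varadhan cost $tJ\approx\tfrac12t\,\alpha_t^{-2}\norm\nabla g\norm_2^2$, made rigorous by an $h$-transform tilting plus a law of large numbers under the tilted law. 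Optimising the ensuing bound $\frac1t\log\E(\e^{\theta_t\|\ell_t\|_p})\ge(1+o(1))\big(\theta_t\alpha^{-d(p-1)/p}\norm g^2\norm_p-\tfrac12\alpha^{-2}\norm\nabla g\norm_2^2\big)$ over $\alpha>0$ and $g$ yields $\theta_t^{1/\lambda}(\rho^{\ssup{\rm c}}_{d,p}(1)+o(1))$; indeed the substitution $g\mapsto\alpha^{d/2}g(\alpha\,\cdot)$ shows $\rho^{\ssup{\rm c}}_{d,p}(\theta)=\theta^{1/\lambda}\rho^{\ssup{\rm c}}_{d,p}(1)$ with $1/\lambda=2p/(2p+d-dp)$, while finiteness and positivity of $\rho^{\ssup{\rm c}}_{d,p}(1)$ follow from the Gagliardo--Nirenberg inequality $\norm g^2\norm_p\le C_{\rm GN}\norm\nabla g\norm_2^{d(p-1)/p}$ (valid for $\norm g\norm_2=1$ precisely when $d(p-1)<2p$) and a single test function. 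The error terms in the tilting estimate are $o(\theta_t^{1/\lambda})$ once $\theta_t$ does not decay too fast, which is where the left inequality in~\eqref{speed} is used.

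\emph{Part (ii)(b).} This is the main obstacle. Periodise again, now onto $\Z^d/(R_t\Z^d)$ with $R_t\to\infty$ chosen so that $\alpha_t\ll R_t$ while the polynomial-in-$R_t^{\,d}$ corrections from~\cite{BHK07} are $\e^{o(t\theta_t^{1/\lambda})}$; tracking the $\ell_z^{-1/2}$-type factors over the $\asymp\alpha_t^{\,d}=\theta_t^{-d/(2\lambda)}$ sites that carry the bulk of the local time produces an error of order $\theta_t^{-d/(2\lambda)}\log t$, and demanding $\theta_t^{-d/(2\lambda)}\log t=o(t\theta_t^{1/\lambda})$ is precisely the condition $\theta_t\gg(\log t/t)^{2\lambda/(d+2)}$ in~\eqref{speed}. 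Applying~\cite{BHK07} on $\Z^d/(R_t\Z^d)$ and substituting $\ell=t\mu$ leads to
\[
\frac1t\log\E(\e^{\theta_t\|\ell_t\|_p})\le o(\theta_t^{1/\lambda})+\sup\Big\{\theta_t\|\mu\|_p-J_{R_t}(\mu)\colon\mu\in\Mcal_1(\Z^d/(R_t\Z^d))\Big\},
\]
and everything reduces to showing that this discrete variational problem has the scaling limit $\theta_t^{1/\lambda}(\rho^{\ssup{\rm c}}_{d,p}(1)+o(1))$. One first observes that a near-maximiser $\mu$ satisfies $J_{R_t}(\mu)\le\theta_t\|\mu\|_p\le\theta_t\to0$, so, by a discrete Faber--Krahn inequality, its mass cannot be trapped on fewer than $\asymp\alpha_t^{\,d}$ sites; then, rescaling space by $\alpha_t$ and running a concentration-compactness argument, one shows that the rescaled profiles $\alpha_t^{d/2}\sqrt{\mu}(\alpha_t\,\cdot)$ neither vanish nor split, so that along subsequences they converge to a maximiser in~\eqref{rhocdef}, with the discrete $\|\cdot\|_p$ and $J_{R_t}$ converging to $\norm g^2\norm_p$ and $\tfrac12\norm\nabla g\norm_2^2$. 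The strengthened assumption $d(p-1)<2$ is what makes this last passage quantitatively tight --- it is needed to preclude a lattice-scale enhancement of the Gagliardo--Nirenberg constant (equivalently, to keep the corrections in the density bound under control), and, correspondingly, the classical self-intersection case $d=p=2$ is not covered by the upper bound.
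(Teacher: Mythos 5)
Your overall architecture tracks the paper's: for part~(i) a lower bound by confinement/tilting and an upper bound by periodisation; for (ii)(a) a rescaled Donsker--Varadhan argument with the scaling identity $\rho^{\ssup{\rm c}}_{d,p}(\theta)=\theta^{1/\lambda}\rho^{\ssup{\rm c}}_{d,p}(1)$; and for (ii)(b) periodisation onto a $t$-dependent torus followed by the density bound of~\cite{BHK07} and a scaling analysis of the resulting discrete variational problem, with \eqref{speed} absorbing the polynomial corrections. Where you differ, the differences are defensible in (i) and (ii)(a): the paper uses Varadhan's lemma directly after periodisation in (i) (the state space is finite, so no need for the explicit density formula), and in (ii)(a) it linearises the functional by H\"older, $\norm L_t\norm_p\geq\langle f,L_t\rangle$, which sidesteps the need to control the approximation error in a direct tilting argument; both of your routes should work with care.

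In part (ii)(b), however, there is a genuine gap at the decisive step, the passage from the discrete variational formula to $\rho^{\ssup{\rm c}}_{d,p}(1)$. Your concentration-compactness sketch asserts that the rescaled profiles $\alpha_t^{d/2}\sqrt{\mu}(\alpha_t\cdot)$ converge to a continuous maximiser ``with the discrete $\|\cdot\|_p$ and $J_{R_t}$ converging to $\norm g^2\norm_p$ and $\frac12\norm\nabla g\norm_2^2$.'' But the raw step function $\alpha_t^{d/2}\sqrt{\mu(\lfloor\alpha_t\cdot\rfloor)}$ is not in $H^1(\R^d)$, so the energy comparison is meaningless without first interpolating; and while the piecewise-linear (finite-element) interpolation makes the discrete Dirichlet form and $\norm\nabla g_n\norm_2^2$ agree exactly (equation \eqref{gnprop1} in the paper), it changes the $L^{2p}$-norm, and this interpolation error must be controlled. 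That control is precisely where $d(p-1)<2$ enters: the $L^{2p}$-norm of the interpolation error is bounded by $C\alpha_{t_n}^{[d(p-1)-2]/(2p)}$ times the gradient norm (see \eqref{pnorm2norm}--\eqref{gnprop2}), and the exponent must be negative. Your explanation of the role of $d(p-1)<2$ --- ``to keep the corrections in the density bound under control'' --- conflates two independent constraints: the density-bound corrections are handled by the lower restriction in \eqref{speed} (as you correctly derive earlier in the same paragraph), whereas $d(p-1)<2$ is the separate, purely analytic obstruction in the $L^{2p}$-comparison between the step function and its interpolant. Without an explicit interpolation scheme and an $L^{2p}$-estimate of this type, the proposed concentration-compactness argument cannot close.
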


Note that we derive the logarithmic asymptotics for exponential moments with fixed parameter $\theta$ in any dimension, but with vanishing parameter $\theta_t$ only in subcritical dimensions $d<\frac{2p}{p-1}$.

A few months after the appearance of the first version of this paper, C.~Laurent \cite{L10b} proved an extension of Theorem~\ref{uptails2}(ii). Indeed, he replaced our assumption $d(p-1)< 2$ by the optimal assumption $d(p-1)< 2p$ and considered all choices of $t^{\frac1p-1}\ll r_t\ll1$. 

A heuristic derivation of Theorem~\ref{uptails2} is given in Section~\ref{outline}. The proof is given in Section~\ref{sec-proof}. Some comments on the related literature are given in Section~\ref{sec-Lit}. We proceed with a couple of remarks.

\subsection{Remarks}\label{sec-remarks}

\begin{bem}[Connection between $\rho_{d,p}^{\ssup{\rm c}}$ and 
$\rho_{d,p}^{\ssup \d}(\theta)$]
Note that the Donsker-Varadhan functional is equal to the walk's Dirichlet form at $\sqrt\mu$, i.e., 
$$
J(\mu)=\frac12\|\nabla^{\ssup{\rm d}}\sqrt\mu\|_2^2,\qquad \mu\in\Mcal_1(\Z^d),
$$
where $\nabla^{\ssup{\rm d}}$ denotes the discrete gradient. Hence, we see that $\rho_{p,d}^{\ssup{\rm c}}(\theta)$ is the continuous version of $\rho_{d,p}^{\ssup{\d}}(\theta)$. 
An important step in our proof of Theorem~\ref{uptails2}(ii) is to show that the continuous version of this formula describes the small-$\theta$ asymptotics of the discrete one, i.e.,
\begin{equation}\label{mainstep}
\rho_{d,p}^{\ssup{\d}}(\theta)\sim \theta^{1/\lambda}\rho_{d,p}^{\ssup{\rm c}}(1),\qquad \theta\downarrow 0.
\end{equation}
(Actually, we only prove a version of this statement on large boxes, see Lemma~\ref{lem-main}.)
In the light of this, we can heuristically explain the transition between the two cases in Theorem \ref{uptails2}. Indeed, if we use (i) for $\theta$ replaced by $\theta_t\rightarrow 0$, then we formally obtain
\begin{equation}
\frac 1 t\log\E\left(\e^{\theta_t\|\ell_t\|_p}\right) \sim \rho^{\ssup{\rm  d}}_{p,d}(\theta_t)\sim \theta_t^{1/\lambda}\rho_{d,p}^{\ssup{\rm  c}}(1).
\end{equation}
Hence, \eqref{mainstep} shows that Theorem~\ref{uptails2}(i) and (ii) are consistent.
\hfill$\Diamond$
\end{bem}

\begin{bem}[On the constant $\rho_{d,p}^{\ssup{\rm c}}(\theta)$]
\label{RhoConnection}
We will show in the following that
\begin{equation}\label{rhoident}
\rho_{p,d}^{\ssup{\rm c}}(\theta)=\theta^{1/\lambda}\lambda\left(\frac{2p}{d(p-1)}\chi_{d,p}\right)^{\frac{\lambda-1}{\lambda}},\qquad \theta\in(0,\infty),
\end{equation}
where
\begin{equation}\label{chiident}
\chi_{d,p}=
\inf\Bigl\{\frac 1{2}\norm \nabla g\norm _2^2\colon g\in L^{2p}(\R^d)\cap L^2(\R^d)\cap H^1(\R^d)
\colon\norm g\norm _2=1=\norm g\norm _{2p}\Bigr\}.
\end{equation}
It turned out in \cite[Lemma~2.1]{GKS04} that $\chi_{d,p}$ is positive if and only 
if $d(p-1)\leq 2p$, i.e., in particular in the cases considered in the present paper. This implies in particular, that $\rho_{p,d}^{\ssup{\rm c}}(\theta)$ is finite and positive for any $\theta>0$. Because of \eqref{mainstep}, also $\rho_{p,d}^{\ssup{\rm d}}(\theta)$ is finite and positive, for any sufficiently small $\theta\in(0,\infty)$. By monotonicity, it is positive for any $\theta\in(0,\infty)$. It is also finite (even not larger than $\theta$), since $J(\mu)\geq0$ and $\|\mu\|_p\leq 1$ for any $\mu\in\Mcal_1(\Z^d)$.

Let us now prove \eqref{rhoident}. In the definition \eqref{rhocdef} of $\rho_{p,d}^{\ssup{\rm c}}(\theta)$, we replace $g$, for any $\beta\in(0,\infty)$, with  $\beta^{d/2}g(\beta \,\cdot)$, which is also $L^2$-normalized. This gives, for any $\beta>0$,
$$
\rho_{p,d}^{\ssup{\rm c}}(\theta)=\sup_{g\in H^1(\R^d)\colon \norm g\norm _2=1}\Big\{ \theta\beta^{\frac{d(p-1)}{p}}\norm g^2\norm _p-\frac{1}{2}\beta^2\norm \nabla g\norm _2^2\Big\}.
$$
Picking the optimal value
$$
\beta^*=\Big(\theta\frac{d(p-1)}p\frac{\norm  g^2\norm _p }{\norm  \nabla g\norm ^2_2}\Big)^{1/(2\lambda)},
$$ 
we get
$$
\rho_{p,d}^{\ssup{\rm c}}(\theta)=\theta^{1/\lambda}\lambda\sup_{g\in H^1(\R^d)\colon \norm g\norm _2=1}\Big(\frac{2p}{d(p-1)}\Big[\frac{\frac 12\norm  \nabla g\norm _2^2 }{\norm  g\norm _{2p}^{2/(1-\lambda)} }\Big]\Big)^\frac{\lambda-1}{\lambda}.
$$
Note that the term in square brackets remains invariant under the transformation $g\mapsto\beta^{d/2}g(\beta\,\cdot )$, which keeps the $L^2$-norm fixed. Thus we may freely add the condition that $\norm  g\norm _{2p}=1$. Recall \eqref{chiident} to see that the proof of \eqref{rhoident} is finished.
\hfill$\Diamond$
\end{bem}

\begin{bem}[Relation to the Gagliardo-Nirenberg constant]\label{GagNirrel}
In dimensions $d\geq 2$, the constant $\chi_{d,p}$ in \eqref{chiident} can be identified in terms 
of the {\it Gagliardo-Nirenberg\/} constant, $K_{d,p}$, as follows. Assume that 
$d\geq 2$ and $p<\frac d{d-2}$. Then $K_{d,p}$ is defined as the smallest constant 
$C>0$ in the {\it Gagliardo-Nirenberg inequality\/} 
\begin{equation}\label{GagNir}
\norm \psi\norm _{2p}\leq C \norm \nabla\psi\norm _2^{\frac{d(p-1)}{2p}}\norm \psi\norm _2^{1-\frac{d(p-1)}
{2p}},\qquad\mbox { for }  \psi\in H^1(\R^d).
\end{equation}
This inequality received a lot of interest from physicists and analysts, and it has 
deep connections to Nash's inequality and logarithmic Sobolev inequalities.
Furthermore, it also plays an important role in the work of Chen \cite{Ch03}, 
\cite{BC04} on intersections of random walks and self-intersections of Brownian 
motion. See \cite[Sect.~2]{Ch03} for more on the Gagliardo-Nirenberg inequality.

It is clear that
\begin{equation}\label{Kdef}
K_{d,p}=\sup_{\heap{\psi\in H^1(\R^d)}{\psi\not=0}}
\frac{\norm \psi\norm _{2p}}{\norm \nabla\psi\norm _2^{\frac{d(p-1)}{2p}}
\norm \psi\norm _2^{1-\frac{d(p-1)}{2p}}}
=\Bigl(\inf_{\heap{\psi\in H^1(\R^d)}{\norm \psi\norm _2=1}}
\norm \psi\norm _{2p}^{-\frac {4q}d}\norm \nabla\psi\norm _2^2\Bigr)^{-\frac d{4q}}.
\end{equation}
Clearly, the term over which the infimum is taken remains unchanged if $\psi$ is 
replaced by $\psi_\beta(\cdot)=\beta^{\frac d2}\psi(\cdot\,\beta)$ for any 
$\beta>0$. Hence, we can freely add the condition $\norm \psi\norm _{2p}=1$ and obtain 
that $$K_{d,p}=\chi_{d,p}^{-\frac d{4q}}.$$ In particular, the variational formulas 
for $K_{d,p}$ in \eqref{Kdef} and for $\chi_{d,p}$ in \eqref{chiident} have the 
same maximizer(s) respectively minimizer(s). It is known that \eqref{Kdef} 
has a maximizer, and this is a smooth, positive and rotationally 
symmetric function (see \cite{We83}). Some uniqueness results are in \cite{MS81}.
\hfill$\Diamond$
\end{bem}

\begin{bem}[Large deviations] \label{RemLDP}
In the spirit of the G\"artner-Ellis theorem (see \cite[Sect.~4.5]{DZ98}), from Theorem~\ref{uptails2}(ii) large-deviation principles for $\|\ell_t\|_p$ on various scales follow. Indeed, fix some function $(r_t)_{t>0}$ satisfying
\begin{equation}\label{rtcondition}
\left(\frac {\log t}t\right)^{\frac{d(p-1)}{p(d+2)}}\ll r_t\ll 1 \qquad\textrm{as}\qquad t\to\infty.
\end{equation}
Then, as $t\to\infty$, under the conditions $p>1$ and $d(p-1)< 2$, we have 
\begin{equation}\label{LDP}
\frac 1t\log\P\big(\big\|\ell_t/t\big\|_p\geq r_t\big)\sim-\chi_{d,p} r_t^{\frac {2p}{d(p-1)}}.
\end{equation}
Applying this to $u r_t$ instead of $r_t$, one obtains that $\|\ell_t\|_p/(t r_t)$ satisfies a large-deviation principle on the scale $tr_t^{2p/(d(p-1))}$ with strictly convex and continuous rate function $(0,\infty)\ni u\mapsto \chi_{d,p}\frac{d(p-1)}{2p} u^{2p/(d(p-1))}$.

In order to prove the upper bound in \eqref{LDP}, put $\theta_t=(r_t\lambda/\rho_{d,p}^{\ssup {\rm c}}(1))^{\lambda/(1-\lambda)}$ and note that the assumption in \eqref{speed} is satisfied. Now use the exponential Chebyshev inequality to see that
$$
\P\big(\big\|\ell_t/t\big\|_p\geq r_t\big)
\leq \E\left(\e^{\theta_t \|\ell_t\|_p}\right)\e^{-tr_t\theta_t}.
$$
Finally use \eqref{ExpMom2} and summarize to see that the upper bound in \eqref{LDP} is true. The lower bound is derived in a standard way using an exponential change of measure, like in the proof of the G\"artner-Ellis theorem. The point is that the limiting logarithmic moment generating function of $\theta_t\|\ell_t\|_p$ is differentiable throughout $(0,\infty)$, as is seen from \eqref{ExpMom1} and \eqref{ExpMom2}.

However, it is not clear to us from Theorem~\ref{uptails2}(i) whether or not $\|\ell_t\|_p/t$ satisfies a large-deviation principle. Indeed, it is unclear if the map $\rho_{d,p}^{\ssup \d}$ is differentiable since the map $\mu\mapsto \theta \|\mu\|_p-J(\mu)$ is a difference of convex functions and therefore not necessarily strictly convex. As a result, we do not know if the maximiser is uniquely attained.
\hfill$\Diamond$
\end{bem}

\begin{bem} One might wonder what \eqref{LDP} might look like in the  critical case $p=\frac d{d-2}$. Note that the right-hand side is then equal to $-\chi_{d,d/(d-2)} r_t$, which is nontrivial according to \cite[Lemma~2.1]{GKS04}, recall Remark~\ref{RhoConnection}. However, in $d\geq 3$, \cite[Theorem 2]{C10} shows that \eqref{LDP} holds, for any $t^{\frac 1p-1}\ll r_t\ll 1$, with $\chi_{d,d/(d-2)}$ replaced by $\sup\{\|\nabla^{\ssup{\rm d}} f\|_2^2\colon f\in\ell^{2p}(\Z^d),\|f\|_{2p}=1\}$, which is a discrete version of $\chi_{d,d/(d-2)}$. This interestingly shows that the critical dimension $d=\frac{2p}{p-1}$ seems to exhibit a different regime and is not the boundary regime of the cases considered here.
\hfill$\Diamond$
\end{bem}

\begin{bem}[Restrictions for $r_t$]\label{SpeedGap} The restriction in \eqref{rtcondition} in our Theorem~\ref{uptails2}(ii) is technical and comes from the error terms in \cite[Theorem 2.1, Prop.~3.6]{BHK07}, which is an important ingredient of our proof of the upper bound, see \eqref{errorterms}. Our proof of the lower bound in \eqref{ExpMom1} does not use this and is indeed true in greater generality.
\hfill$\Diamond$
\end{bem}

\begin{bem}[Restriction in the dimension] Unfortunately, our proof method does not work for all the values of $p$ and $d$ in the subcritical dimensions. The point is that in the proof of the statement in \eqref{mainstep}, we have to approximate a certain step function with its interpolating polygon line in $L^{2p}$-sense, and the difference is essentially equal to the gradient of the polygon line. A control in $L^2$-sense is possible by comparison to the energy term, but the required $L^{2p}$-control represents a problem that we did not overcome to full extent, see \eqref{pnorm2norm}.
\hfill$\Diamond$
\end{bem}

\subsection{Literature remarks}\label{sec-Lit}

\noindent For decades, and particularly in this millenium, there is an active interest in the extreme behaviour of self-intersection local times and their connections with the theory of large deviations. The recent monograph \cite{Ch09} comprehensively studies a host of results and concepts on extremely self-attractive paths and the closely related topic of extreme mutual attraction of several independent random paths. This subject is a rich source of various phenomena that arise, depending on the dimension $d$, the intersection parameter $p$ and the scale of the deviation, $r_t$. In particular, an interesting collapse transition in  the path behaviour from subcritical dimensions (which we study here) to supercritical dimensions can be observed: homogeneous squeezing versus short-time clumping.  See \cite{K10} for a concise description of the relevant heuristics and a survey on current proof techniques.

Various methods have been employed in this field, and the monograph \cite{Ch09} gave an inspiration to develop new ideas very recently. Le Gall \cite{Le86} used a bisection technique (introduced by Varadhan in \cite{V69}) of successive division of the path into equally long pieces and controlling the mutual interaction. In the important special cases $d=2=p$ and $d=3, p=2$, Bass, Chen and Rosen \cite[Theorem 1.1]{BCR06} and Chen, respectively, derived our Theorem~\ref{uptails2}(ii) in greater generality, using the bisection method, see also \cite[Theorems 8.2.1 and 8.4.2]{Ch09}. Furthermore, Chen and Li \cite[Theorem~1.3]{CL04} proved it also in $d=1$ for arbitrary $p\in\N$, see \cite[Theorem~8.1.1]{Ch09}. Their strategy made the application of Donsker-Varadhan's large-deviation technique possible by a sophisticated compactification procedure, which uses a lot of abstract functional analysis and goes back to de Acosta. An earlier result by Chen \cite[Theorem~3.1, (3.3)]{Ch03} concerns a smoothed version of Theorem~\ref{uptails2}(ii) for all subcritical values of integers $p$. The bisection method has been further developed by Asselah to other values of $p>1$ in a series of papers, out of which we want to mention \cite{AC07}, \cite{A08}, and \cite{A09}. A combinatorial method that controls the high, $t$-dependent, polynomial moments of the intersection local time was applied in \cite[Prop.~2.1]{HKM04} to derive a weaker statement than Theorem~\ref{uptails2}(ii), still identifying the correct scale. Recently, Castell \cite{C10} used Dynkin's isomorphism for deriving precise logarithmic asymptotics for the deviations of the intersection local times in $d\geq3$ for the critical parameter $p=\frac d{d-2}$, which is the boundary of our restriction $d(p-1)<2p$. See the introduction of \cite {C10} for an extensive but concise summary of related results. Her technique was later adapted to proofs of very-large deviation principles in sub- and in supercritical dimension  \cite{L10a}, \cite{L10b}.

The present paper uses a new strategy that goes back to a formula for the joint density of the local times of any continuous-time finite-state space Markov chain. The kernel is an explicit upper bound for this density, which basically implies the upper bound in Donsker-Varadhan's large-deviation principle for these local times without using any topology. In this way, one obtains a discrete, $t$-dependent variational formula, and the main task is to find its large-$t$ asymptotics. This is done via techniques in the spirit of $\Gamma$-convergence. An example of this technique was carried out in \cite[Section 5]{HKM04}.

\subsection{Heuristic derivation of Theorem~\ref{uptails2}}\label{outline}

We now give a heuristic derivation of Theorem~\ref{uptails2}(ii), which is based on large-deviation theory. 

Recall that $\lambda=(2p-dp+d)/2p\in(0,1)$. For some scale function $\alpha_t\to\infty$, to be specified later, define the random step function $L_t\colon\R^d\to[0,\infty)$ as the scaled normalized version  of the local times $\ell_t$, i.e.,
\begin{equation}\label{Lndef}
L_t(x)=\frac{\alpha_t^d}t \ell_t\bigl(\lfloor x\alpha_t\rfloor\bigr),
\qquad\mbox { for }  x\in \R^d.
\end{equation}
Then $L_t$ is a random element of the set
\begin{equation}
\Fcal=\Bigl\{f\in L^1(\R^d)\colon f\geq 0, \int_{\R^d} f(x)\,\d x=1\Bigr\}
\end{equation}
of all probability densities on $\R^d$. In the spirit of the celebrated large-deviation
theorem of Donsker and Varadhan, if $\alpha_t$ satisfies
$1\ll \alpha_t^d\ll a_{d,0}(t)$ (see \eqref{typical}), 
then the distributions of $L_t$ satisfy a weak large-deviation principle 
in the weak $L^1$-topology on $\Fcal$ with speed $t\alpha_t^{-2}$ and rate 
function $\Ical \colon \Fcal\to[0,\infty]$ given by
\begin{equation}\label{Idef}
\Ical(f)=\begin{cases} \frac 1{2} \bigl\Vert\nabla\sqrt{f} \bigr\Vert_2^2 
&\mbox{if } \sqrt{f}\in H^1(\R^d),\\ \infty&\mbox{otherwise.} 
\end{cases}
\end{equation}
Roughly, this large-deviation principle says that,
\begin{equation}\label{LDPappr}
\P(L_t\in \,\cdot\,)=
\exp\Bigl\{-\frac t{\alpha_t^2} \Bigl[\inf_{f\in\,\cdot}\Ical(f)+o(1)\Bigr]\Bigr\},
\end{equation} 
and the convergence takes place in the weak topology. This principle has been partially proved in a special case in \cite{DV79}, a proof in the general case was given in \cite[Prop.~3.4]{HKM04}.

In order to heuristically recover Theorem~\ref{uptails2}(ii) in terms of
the statement in \eqref{LDPappr}, note that
$$
\theta_t\|\ell_t\|_p=\theta_t\Big(\sum_{z\in\Z^d}\ell_t(z)^p\Big)^{1/p}
= t\theta_t\alpha_t^{-d}\Big(\sum_{z\in\Z^d}L_t\bigl({\textstyle{\frac z{\alpha_t}}}\bigr)^p\Big)^{1/p}
= t\theta_t\alpha_t^{\frac{d(1-p)}{p}}\norm L_t\norm_p.
$$
Now we choose $\alpha_t=\theta_t^{-1/(2\lambda)}$ and have therefore that
\begin{equation}\label{Lambdaident}
\theta_t\|\ell_t\|_p=\frac t{\alpha_t^2}\norm L_t\norm_p\qquad\mbox{and}\qquad t\theta_t^{1/\lambda}=\frac t{\alpha_t^2}.
\end{equation}
Therefore, the scale $t/\alpha_t^2$ of the large-deviation principle coincides with the logarithmic scale $t\theta_t^{1/\lambda}$ of the expectation under interest in Theorem~\ref{uptails2}(ii). A formal application of Varadhan's lemma yields
$$
\E\Big(\e^{\theta_t\|\ell_t\|_p}\Big)=
\E\Big(\exp\Big\{\frac t{\alpha_t^2}\norm L_t\norm _p\Big\}\Big)
=\exp\Big\{\frac t{\alpha_t^2}(\widetilde\rho+o(1))\Big\},
$$
where 
\begin{equation}\label{chitildedef}\begin{aligned}
\widetilde\rho& =\sup\Bigl\{\norm f\norm _p-\Ical(f)\colon f\in\Fcal\Bigr\}\\ 
&  =\sup \big\{\norm g^2\norm _p-\frac 12\norm \nabla g\norm _2^2 \, : \,
g\in L^2(\R^d)\cap L^{2p}(\R^d)\cap H^1(\R^d), \, \norm g\norm _2=1 \big\}\\
&=\rho_{d,p}^{\ssup {\rm c}}(1). 
\end{aligned}
\end{equation}
This ends the heuristic derivation of Theorem~\ref{uptails2}(ii). In the same way, one can derive also Theorem~\ref{uptails2}(i); this is similar to the line of argument used in \cite{GM98}.

Hence, we see informally that, in Theorem~\ref{uptails2}(ii), the main contribution to the exponential moments should come from those random walk realisations that make the rescaled local times, $L_t$, look like the minimiser(s) of the variational formula $\rho_{d,p}^{\ssup {\rm c}}(1)$. In particular, the random walk should stay within a region with diameter $\alpha_t\ll t^{1/d}$, and each local time should be of order $t/\alpha_t^d\gg 1$. That is, there is a time-homogeneous squeezing strategy. In Theorem~\ref{uptails2}(i), the interpretation is analogous, but the diameter of the preferred region is now of finite order in $t$. This is why a discrete picture arises in the variational formula $\rho_{d,p}^{\ssup {\rm d}}(1)$.

There are several serious obstacles to be removed when trying to turn the above 
heuristics into an honest proof: (1) the large-deviation principle only holds on 
compact subsets of $\R^d$, (2) the functional $L_t\mapsto \norm L_t\norm _p$ is not bounded, 
and (3) this functional is not continuous. Removing the 
obstacle (1) is easy and standard, but it is in general notoriously difficult to 
overcome the obstacles (2) and (3) for related problems.

\section{Proof of Theorem~\ref{uptails2}}\label{sec-proof}

\noindent We prove Theorem~\ref{uptails2}(i) (that is, \eqref{DiscreteExpMom}) in Section~\ref{sec-proof1}, the lower-bound part \eqref{ExpMom1} of Theorem~\ref{uptails2}(ii) in Section~\ref{sec-low} and the upper-bound part  \eqref{ExpMom2} in Section~\ref{sec-upp}.

\subsection{Proof of {\bf{(\ref{DiscreteExpMom})}}}\label{sec-proof1}

\noindent This is analogous to the proof of \cite[Theorem~1.2]{GM98}; we will sketch the argument. First we explain the lower bound. Let $Q_R$ denote the box $[-R,R]^d\cap\Z^d$ and insert an indicator on the event $\{\supp(\ell_t)\subset Q_R\}$ in the expectation, to get, for any $\theta>0$,
$$
\E\left(\e^{\theta\|\ell_t\|_p}\right)
\geq \E\left(\e^{\theta t \|\ell_t/t\|_p}\1_{\{\supp(\ell_t)\subset Q_R\}}\right).
$$
Now observe that the functional $\mu\mapsto \|\mu\|_p$ is continuous and bounded on the set $\Mcal_1(Q_R)$ of all probability measures on $Q_R$. Furthermore, under the sub-probability measure $\P(\cdot,\supp(\ell_t)\subset Q_R)$, the distributions of $\ell_t/t$ satisfy a large-deviations principle with scale $t$ and rate function equal to the restriction of $J$ defined in Theorem~\ref{uptails2}(i) to $\Mcal_1(Q_R)$. Hence,
Varadhan's lemma \cite[Lemma~4.3.4]{DZ98} yields that
\begin{equation}\label{prooflowerR}
\liminf_{t\to\infty}\frac 1t\log \E\left(\e^{\theta\|\ell_t\|_p}\right)
\geq \sup_{\mu\in \Mcal_1(Q_R)}\big(\theta \|\mu\|_p-J(\mu)\big).
\end{equation}
Letting $R\to\infty$ and using an elementary approximation argument, we see that the right-hand side converges towards $\rho_{d,p}^{\ssup \d}(\theta)$. This ends the proof of the lower bound in \eqref{DiscreteExpMom}.

Now we explain the upper bound. Introduce the periodized version of the local times in $Q_R$,
\begin{equation}\label{periodloctim}
\ell_t^{\ssup R}(z)=\sum_{x\in\Z^d}\ell_t(z+Rx),\qquad t\in(0,\infty), R\in \N, z\in Q_R.
\end{equation}
Then it is easy to see that $\|\ell_t\|_p\leq \|\ell_t^{\ssup R}\|_{p,R}$, where $ \| \cdot \|_{p,R}$ denotes the $p$-norm for functions $Q_R\to\R$. Hence, for any $\theta\in(0,\infty)$,
$$
\E\big(\e^{\theta \|\ell_t\|_p}\big)\leq \E\big(\e^{\theta \|\ell_t^{\ssup R}\|_{p,R}}\big)
=\E\big(\e^{\theta t \|\ell_t^{\ssup R}/t\|_{p,R}}\big).
$$
It is well-known that $(\ell_t^{\ssup R}/t)_{t>0}$ satisfies a large-deviations principle on the set of probability measures on $Q_R$ with rate function $\mu\mapsto J_{R,{\rm per}}(\mu)$ equal to the Dirichlet form at $\sqrt \mu$ of $-\frac 12\Delta$ in $Q_R$ with periodic boundary condition. By continuity and boundedness of the map $\mu\mapsto \|\mu\|_p$, it is clear from Varadhan's lemma that
\begin{equation}\label{proofupperR}
\limsup_{t\to\infty}\frac 1t\log \E\left(\e^{\theta\|\ell_t\|_p}\right)
\leq \sup_{\mu\in \Mcal_1(Q_R)}\big(\theta \|\mu\|_p-J_{R,{\rm per}}(\mu)\big).
\end{equation}
In the same way as in the proof of \cite[Lemma 1.10]{GM98}, one shows that the difference between the variational formulas on the right-hand sides of \eqref{proofupperR} and \eqref{prooflowerR} vanish in the limit as $R\to\infty$. This ends the proof of \eqref{DiscreteExpMom}.

\subsection{Proof of the lower bound {\bf{(\ref{ExpMom1})}}}\label{sec-low}
 
\noindent Fix $q>1$ with $\frac 1p+\frac  1q=1$, and consider a continuous bounded function $f\colon\R^d\to\R$ such that $\norm f\norm _q=1$. According to H\"older's inequality, we have
$$
\norm L_t\norm _p\geq \langle f,L_t\rangle.
$$
Recall from Section~\ref{outline} that $\alpha_t=\theta_t^{-1/(2\lambda)}$. Using \eqref{Lambdaident}, we obtain, for any $R>0$, the lower bound
\begin{equation}
\E\left(\e^{\theta_t\|\ell_t\|_p}\right)
=\E\left(\e^{t\alpha_t^{-2}\norm L_t\norm _p}\right)
\geq\E\left(\e^{t \alpha_t^{-2}\langle f,L_t\rangle}\1_{\{\supp(L_t)\subset B_R\}}\right),
\end{equation}
where we denote $B_R=[-R,R]^d$. According to \cite[Lemma~3.2]{GKS04}, the distributions of $L_t$ under the sub-probability measure $\P(\cdot\, ,\supp(L_t)\subset B_R)$ satisfy, as $t\to\infty$, a large-deviation principle on the set of probability densities on $\R^d$ with support in $B_R$. The rate function is
$$
g^2\mapsto \frac 12 \norm \nabla g\norm _2^2,\qquad g\in H^1(\R^d), \norm g\norm _2=1,\supp(g)\subset B_R,
$$
and we put the value of the rate function equal to $+\infty$ if $g$ is not in $H^1(\R^d)$ or not normalized or if its support is not contained in $B_R$. The speed is $t\alpha_t^{-2}$, which is identical to the logarithmic scale in \eqref{ExpMom1}, $t\theta_t^{1/\lambda}$. Since the map $L_t\mapsto \langle f,L_t\rangle$ is continuous, we obtain, by Varadhan's lemma, from that large-deviation principle the following estimate for the exponential moments:
$$
\liminf_{t\to\infty}\frac {\alpha_t^2}t\log \E\left(\e^{\theta_t\|\ell_t\|_p}\right)
\geq \sup_{\heap{g\in H^1(\R^d)\colon \norm g\norm _2=1}{\supp(g)\subset B_R}}\left(\langle f,g^2\rangle-\frac 12 \norm \nabla g\norm _2^2\right).
$$
Certainly we can restrict the supremum over $g$ to $g\in L^{2p}(\R^d)$.  Since the left-hand side does not depend on $f$, we can take on the right-hand side the supremum over all continuous bounded $f\colon B_R\to\R$ satisfying $\norm f\norm _q=1$. Using an elementary approximation argument and the duality between $L^q(\R^d)$ and $L^p(\R^d)$, we see that
$$
\sup_{f\in \Ccal(B_R), \norm f\norm _q=1}
\sup_{\heap{g\in H^1(\R^d)\colon \norm g\norm _2=1}{\supp(g)\subset B_R}}\left(\langle f,g^2\rangle-\frac 12 \norm \nabla g\norm _2^2\right)
\geq \sup_{\heap{g\in H^1(\R^d)\cap L^{2p}(\R^d)\colon \norm g\norm _2=1}{\supp(g)\subset B_R}}\left(\norm g^2\norm _p-\frac 12 \norm \nabla g\norm _2^2\right).
$$
Letting $R\to\infty$ and using another elementary approximation argument, we see that the right-hand side converges to $\rho_{d,p}^{\ssup {\rm c}}(1)$. This ends the proof of the lower bound in \eqref{ExpMom1}.

\subsection{Proof of the upper bound {\bf{(\ref{ExpMom2})}}}\label{sec-upp}

\noindent Fix $\theta_t\in(0,\infty)$ satisfying \eqref{speed}. Recall from Section~\ref{outline} that $\lambda=(2p-dp+d)/2p\in(0,1)$ and that $\alpha_t=\theta_t^{-1/(2\lambda)}$. As in the proof of \eqref{DiscreteExpMom}, we estimate from above against a periodized version of the walk, but now in the $t$-dependent box $Q_{R\alpha_t}= [-R\alpha_t,R\alpha_t]^d\cap \Z^d$. Recall that $\ell_t^{\ssup{R\alpha_t}}$ denotes the periodized version of the local times, see \eqref{periodloctim}. We estimate
\begin{equation}\label{BHKbeforeUse}
\E\left(\e^{\theta_t \| \ell_t\|_p}\right)
\leq\E\left(\e^{\theta_t \| \ell_t^{\ssup{R\alpha_t}}\|_{p,R\alpha_t}}\right)
=\E\left(\exp\Big\{t\alpha_t^{-2\lambda}\big\|\smfrac 1t\ell_t^{\ssup{R\alpha_t}}\big\|_{p,R\alpha_t}\Big\}\right).
\end{equation}
Note that $\ell_t^{\ssup{R\alpha_t}}$ is the local time vector of the continuous-time random walk on $Q_{R\alpha_t}$ with generator $A_{R\alpha_t}$, which is $\frac 12$ times the Laplace operator in $Q_{R\alpha_t}$ with periodic boundary condition.

Now we employ a recently developed method for effectively deriving large-deviation upper bounds without continuity and boundedness assumptions. The base of this method has been laid in \cite{BHK07} and has been applied first in \cite[Theorem~3.7]{BHK07}  and \cite[Section 5]{HKM04}. The main point is the identification of a joint density of the local time vector $\ell_t^{\ssup{R\alpha_t}}$ and of an explicit upper bound for this density. In this way, no continuity or boundedness is required, which is a great improvement over classical large-deviations arguments. The upper bound is in terms of a discrete-space variational formula and additional error terms involving the box size. Let us remark that these error terms give us the lower restriction for $\theta_t$ in \eqref{speed}. The main work after the application of the upper bound is to derive the large-$t$ asymptotics of the discrete variational formula, which requires Gamma-convergence techniques.

We apply \cite[Theorem~3.6]{BHK07} to get, for any $t\geq 1$,
\begin{equation}\label{BHKinUse}
\begin{aligned}
\log\E\left(\exp\Big\{t\alpha_t^{-2\lambda}\big\|\smfrac 1t\ell_t^{\ssup{R\alpha_t}}\big\|_{p,R\alpha_t}\Big\}\right)
&\leq  t\sup_{\mu\in \Mcal_1(Q_{R\alpha_t})}\left[\alpha_t^{-2\lambda}\|\mu\|_{p,R\alpha_t}-\| \left(-A_{R\alpha_t}\right)^{1/2}\sqrt{\mu}\|_{2,R\alpha_t}^2 \right]\\
&\qquad+| Q_{R\alpha_t}|\log\left(2d\sqrt{8\e}\,t\right)
+\log | Q_{R\alpha_t}|+ \frac{| Q_{R\alpha_t}|}{4t}
\end{aligned}
\end{equation}
Here we have used that $\eta_{Q_{R\alpha_t}}$, defined in \cite[(3.2)]{BHK07}, is equal to $2d$.

Let us first show that the terms in the second line on the right-hand side are asymptotically negligible on the scale $t/\alpha_t^2=t\theta_t^{1/\lambda}$. Indeed, these terms are of order $\alpha_t^d\log t$, and we see that
\begin{equation}\label{errorterms}
\alpha_t^d\log t=\frac t{\alpha_t^2} \alpha_t^{d+2}\frac {\log t}t=\frac t{\alpha_t^2} 
\theta_t^{-\frac{d+2}{2\lambda}}\frac {\log t}t
\ll\frac t{\alpha_t^2},
\end{equation}
where we used \eqref{speed}.

Hence, substituting this in \eqref{BHKbeforeUse} and \eqref{BHKinUse}, it is clear that, for the proof of the upper bound in \eqref{ExpMom2}, it is sufficient to prove the following.

\begin{lemma}\label{lem-main}
\begin{equation}\label{GammaConv}
\limsup_{R\to\infty}\limsup_{t\to\infty}\alpha_t^2\sup_{\mu\in \Mcal_1(Q_{R\alpha_t})}\left[\alpha_t^{-2\lambda}\|\mu\|_{p,R\alpha_t}-\big\| \left(-A_{R\alpha_t}\right)^{1/2}\sqrt{\mu}\big\|_{2,R\alpha_t}^2 \right]
\leq \rho_{p,d}^{\ssup {\rm c}}(1).
\end{equation}
\end{lemma}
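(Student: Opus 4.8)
The statement is a $\Gamma$-convergence-type upper bound: the rescaled discrete variational problems on the $t$-dependent periodic boxes $Q_{R\alpha_t}$ should not, in the double limit, exceed the continuous variational constant $\rho_{p,d}^{\ssup{\rm c}}(1)$. My strategy would be to take, for each $t$ and $R$, a near-optimal measure $\mu=\mu_{t,R}\in\Mcal_1(Q_{R\alpha_t})$ for the supremum in \eqref{GammaConv}, build from it a continuous competitor $g_{t,R}\in H^1(\R^d)$ with $\norm{g_{t,R}}_2=1$, and show that
\begin{equation*}
\alpha_t^2\Big[\alpha_t^{-2\lambda}\|\mu_{t,R}\|_{p,R\alpha_t}-\big\|(-A_{R\alpha_t})^{1/2}\sqrt{\mu_{t,R}}\big\|_{2,R\alpha_t}^2\Big]
\le \theta|\!|\!| g_{t,R}^2|\!|\!|_p-\tfrac12\norm{\nabla g_{t,R}}_2^2+o(1)
\end{equation*}
for a suitable $\theta$ (in fact $\theta=1$ after rescaling), with $o(1)\to0$ as $t\to\infty$ and then $R\to\infty$. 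The right-hand side is $\le\rho_{p,d}^{\ssup{\rm c}}(1)$ by definition, so letting $t\to\infty$ and then $R\to\infty$ gives the claim.

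\textbf{Construction and rescaling.} Write $f_{t,R}(x)=\alpha_t^d\,\mu_{t,R}(\lfloor x\alpha_t\rfloor)$ for $x\in B_R=[-R,R]^d$ (extended periodically), a probability density on $B_R$; the scaling here mirrors \eqref{Lndef}. First I would compare $\sqrt{f_{t,R}}$ (a step function) with its interpolating polygon line $h_{t,R}$, a piecewise-affine function on a mesh of width $1/\alpha_t$: the Dirichlet form $\big\|(-A_{R\alpha_t})^{1/2}\sqrt{\mu_{t,R}}\big\|_{2,R\alpha_t}^2$, after the $\alpha_t^2$-rescaling, equals $\tfrac12\norm{\nabla h_{t,R}}_2^2$ exactly (this is the standard identity between the discrete Dirichlet form and the $L^2$-norm of the gradient of the linear interpolation), while $\|\mu_{t,R}\|_{p,R\alpha_t}$, suitably rescaled, equals $|\!|\!| f_{t,R}|\!|\!|_p$. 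So the whole bracket becomes $|\!|\!| f_{t,R}|\!|\!|_p-\tfrac12\norm{\nabla h_{t,R}}_2^2$, with $f_{t,R}=(\text{step function})^2$ and $h_{t,R}$ its interpolation. I would then normalise: set $g_{t,R}=h_{t,R}/\norm{h_{t,R}}_2$; since $h_{t,R}$ differs from a unit step function only by a higher-order term controllable by the Dirichlet energy, $\norm{h_{t,R}}_2\to1$, and the normalisation costs only $o(1)$.

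\textbf{The approximation estimate — the main obstacle.} The heart of the matter is to replace $|\!|\!| f_{t,R}|\!|\!|_p=|\!|\!| (\text{step})^2|\!|\!|_p$ by $|\!|\!| g_{t,R}^2|\!|\!|_p=|\!|\!| h_{t,R}^2|\!|\!|_p/\norm{h_{t,R}}_2^2$, i.e.\ to show
\begin{equation*}
\big|\,|\!|\!| (\text{step function})^2|\!|\!|_p-|\!|\!| h_{t,R}^2|\!|\!|_p\,\big|\longrightarrow 0
\end{equation*}
as $t\to\infty$. This is precisely the point flagged in the remarks after Theorem~\ref{uptails2}: the difference between the step function and its polygon interpolant is essentially $\nabla h_{t,R}$ on each mesh cell, so an $L^2$-bound on this difference follows from the uniform bound on $\tfrac12\norm{\nabla h_{t,R}}_2^2$ (which is bounded because the bracket is bounded below, as the left-hand side of \eqref{GammaConv} is finite), but the norm we actually need to control the discrepancy in is $L^{2p}$, not $L^2$. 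I would close this gap by interpolating: use the elementary inequality $|a^2-b^2|\le|a-b|(|a|+|b|)$ together with Hölder to reduce the $L^{2p}$-discrepancy of the squares to the product of an $L^{2q}$-norm of $|\text{step}-h_{t,R}|$ (hence $\lesssim \alpha_t^{-1}\norm{\nabla h_{t,R}}_{2q}$ cell by cell) and an $L^{2p}$-type bound on $|\text{step}|+|h_{t,R}|$; the subcritical condition $d(p-1)<2$ is exactly what makes a Gagliardo--Nirenberg interpolation between the controlled $L^2$-gradient and the (a priori bounded, by the same boundedness of the bracket) $L^{2p}$-norm of $h_{t,R}$ give a power of $\alpha_t$ that still tends to $0$; this is the mechanism referenced in \eqref{pnorm2norm} in the remarks, and it is where the stronger dimension restriction enters. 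One must also handle the periodic boundary by noting $|\!|\!|\cdot|\!|\!|_p$ on $B_R$ only adds the periodisation, and the passage $R\to\infty$ is a routine extension/cutoff argument comparing $\sup$ over densities supported in a box with the $\sup$ over all of $\R^d$, exactly as in the proof of \eqref{DiscreteExpMom}. Assembling these pieces yields \eqref{GammaConv}.
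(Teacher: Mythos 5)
Your overall plan is the paper's: take a near-optimal $\mu\in\Mcal_1(Q_{R\alpha_t})$, build from it a piecewise-affine (finite-element) interpolant $g\in H^1(\R^d)$, observe that the discrete Dirichlet form equals $\norm\nabla g\norm_2^2$ after the $\alpha_t^2$-rescaling, control the $L^2$-normalisation error by the energy, pass from periodic to zero boundary with a cutoff, and then estimate the resulting continuous functional by $\rho_{d,p}^{\ssup{\rm c}}$. You also correctly pinpoint the crux: one has to replace $|\!|\!|(\text{step})^2|\!|\!|_p$ by $|\!|\!| g^2|\!|\!|_p$, which requires an $L^{2p}$-bound on the step--interpolant discrepancy while only an $L^2$-type bound (from the energy) is natively available.

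Where your proposal has a gap is in the resolution of that obstacle. You propose to close it by a Gagliardo--Nirenberg interpolation between the $L^2$-gradient and the $L^{2p}$-norm of $h_{t,R}$, after reducing $|\!|\!| h_{t,R}^2-(\text{step})^2|\!|\!|_p$ to a product of an $L^{2q}$-norm of the discrepancy and an $L^{2p}$-norm of $h_{t,R}+\text{step}$. But the discrepancy on each mesh cell is of order $\alpha_t^{-1}|\nabla h_{t,R}|$, so what you actually need to control is $\norm\nabla h_{t,R}\norm_{2q}$ for some $q>1$, and neither Gagliardo--Nirenberg (which bounds $\norm u\norm_{2q}$ by $\norm\nabla u\norm_2$ and $\norm u\norm_2$, not $\norm\nabla u\norm_{2q}$ by lower-order gradient norms) nor the a priori boundedness of the bracket gives you that. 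As sketched, the argument does not close. The paper's proof avoids the interpolation entirely by a much more elementary observation, which is the real mechanism and the reason for the restriction $d(p-1)<2$: since $\mu_n$ is a probability measure on a finite set, $\sqrt{\mu_n}\le 1$ pointwise, so every discrete increment $\big|\sqrt{\mu_n(\cdot+\e_i)}-\sqrt{\mu_n(\cdot)}\big|\le 1$, and hence the interpolant's linear corrections $f_{n,\sigma,i}$ satisfy $|f_{n,\sigma,i}|\le 1$. Therefore $|f_{n,\sigma,i}|^{2p}\le|f_{n,\sigma,i}|^2$ pointwise, which converts the needed $L^{2p}$-bound directly into the $L^2$-gradient bound (see \eqref{pnorm2norm}); after rescaling this yields a factor $\alpha_{t_n}^{[d(p-1)-2]/2p}$, which is $o(1)$ precisely when $d(p-1)<2$ (this is \eqref{gnprop2}). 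Once that estimate is available, the rest of the paper's argument proceeds essentially as you describe, with the $R\to\infty$ passage handled by a cutoff $\Psi_{R_n}$ together with a shift-invariance argument to control the mass near the boundary of $B_{R_n}$; your characterisation of that step as ``routine'' is fair, but the key step you flagged as the main obstacle is resolved by a pointwise bound, not by interpolation inequalities.
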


\begin{proof} We will adapt the method described in \cite[Prop.~5.1]{HKM04}. First, we pick sequences $R_n\to\infty$, $t_n\to\infty$ and $\mu_n\in\Mcal_1(Q_{R_n\alpha_{t_n}})$ such that
\begin{equation}\label{Rntnmun}
\mbox{L.h.s.~of \eqref{GammaConv}}
\leq \alpha_{t_n}^{d(p-1)/p}\| \mu_n\|_{p,R_n\alpha_{t_n}}-\alpha_{t_n}^2\big\| \left(-A_{R_n\alpha_{t_n}}\right)^{1/2}\sqrt{\mu_n }\big\|_{2,R_n\alpha_{t_n}}^2 +\smfrac 1n, \qquad n\in\N.
\end{equation}
We may assume that $\mu_n$ is a probability measure on $\Z^d$ with support in $Q_{R_n\alpha_{t_n}}$.

In the following, we will construct a sequence $(h_n)_n$ in $H^1(\R^d)$ such that (1) $h_n$ is $L^2$-normalized, (2)  the term $\alpha_{t_n}^2\| (-A_{R_n\alpha_{t_n}})^{1/2}\sqrt{\mu_n }\|_{2,R_n\alpha_{t_n}}^2$ is approximately equal to its energy, $\frac 12\norm \nabla h_n\norm _2^2$, and (3) the term $\alpha_{t_n}^{d(p-1)/p}\| \mu_n\|_{p,R_n\alpha_{t_n}}$ is approximately equal to $\norm h_n^2\norm _p$. Having constructed such a series, the proof is quickly finished.

We are using finite-element methods to construct such function $h_n$, see \cite{B07} for the general theory. We split $\R^d$ along the integer grid into half-open unit cubes $C(k)=\times _{i=1}^d(k_i,k_i+1]$ with $k=(k_1,\dots, k_d)\in\Z^d$. Each such cube is split into $d!$ \lq tetrahedra\rq\ as follows. For $\sigma\in\Sym_d$, the set of permutations of $1,\dots,d$, we denote by $T_\sigma(k)$ the intersection of $C(k)$ with the convex hull generated by $k,k +\e_{\sigma(1)},\ldots,k +\e_{\sigma(1)}+\ldots+\e_{\sigma(d)}$, where $\e_i$ denotes the $i$-th unit vector in $\R^d$. Up to the boundary, the tetrahedra $T_\sigma(k)$ with $\sigma\in\Sym_d$ are pairwise disjoint. One can easily see that, for $x\in C(k)$,
$$
x\in T_{\sigma}(k)\qquad\Longleftrightarrow \qquad x_{\sigma(1)}-\lfloor x_{\sigma(1)}\rfloor \geq\ldots\geq x_{\sigma(d)}-\lfloor x_{\sigma(d)}\rfloor>0.
$$
The interior of $T_{\sigma}(k)$ is characterised by strict inequalities. A site $x$ belongs to two different of these tetrahedra if and only if at least one of the inequalities is an equality.

Now we introduce the following functions. For $n\in\N$, $i\in\{1,\dots,d\}$ and $y\in\R^d$, put
\begin{equation}\label{fndef}
f_{n,\sigma,i}(y)=\Big[\sqrt{\mu_n\big(\lfloor y \rfloor+{\e}_{\sigma(1)}+\dots+{\e}_{\sigma(i)}\big)}-\sqrt{\mu_n\big(\lfloor y \rfloor+{\e}_{\sigma(1)}+\dots+{\e}_{\sigma(i-1)}\big)}\,\Big]\,(y_{\sigma(i)}-\lfloor y_{\sigma(i)}\rfloor),
\end{equation}
where we use the convention $\sigma(0)=0$ and ${\e}_0=0$.
Furthermore, given $k\in\Z^d$ and $y\in C(k)$, we pick some $\sigma(y)\in\Sym_d$ such that $y\in T_{\sigma(y)}(k)$ and define
\begin{equation}\label{gndef}
g_{n}(x)=\alpha_{t_n}^{d/2}\mu_n(\lfloor \alpha_{t_n}x\rfloor )^{1/2}+\alpha_{t_n}^{d/2}\sum_{i=1}^d f_{n,\sigma(\alpha_{t_n}x),i}(\alpha_{t_n}x).
\end{equation}
Now we show that $g_n$ is well-defined, i.e., that if $y$ lies in $T_{\sigma_1}(k)$ and  in $T_{\sigma_2}(k)$, then 
\begin{equation}
\sum_{i=1}^d f_{n,\sigma_1,i}(y)=\sum_{i=1}^d f_{n,\sigma_2,i}(y).
\end{equation} 
For sake of simplicity, we do this only for the special case
$$
\sigma_1(1)=1 =\sigma_2(2)\qquad  \textrm{and}\qquad \sigma_1(2)=2=\sigma_2(1),\qquad\mbox{and }\sigma_1(i)=\sigma_2(i)\mbox{ for }i\geq 3.
$$
That is, $y_1-\lfloor y_1\rfloor=y_2-\lfloor y_2\rfloor \geq y_i-\lfloor y_i\rfloor$ for any $i\in\{3,\ldots,d\}$. We calculate
$$
\begin{aligned}
\sum_{i=1}^d f_{n,\sigma_1,i}(y)-\sum_{i=1}^d f_{n,\sigma_2,i}(y)
&= \Big[\sqrt{\mu_n\big(\lfloor y \rfloor+\e_1\big)}-\sqrt{\mu_n\big(\lfloor y \rfloor\big)}\Big]    \,(y_1-\lfloor y_1\rfloor)\\
&\quad+\Big[\sqrt{\mu_n\big(\lfloor y \rfloor+\e_1+\e_2\big)}-\sqrt{\mu_n\big(\lfloor y \rfloor+{\e}_1}\Big]    \,(y_2-\lfloor y_2\rfloor)\\
&\quad- \Big[\sqrt{\mu_n\big(\lfloor y \rfloor+{\e}_2\big)}-\sqrt{\mu_n\big(\lfloor y \rfloor}\Big]    \,(y_2-\lfloor y_2\rfloor)\\
&\quad-\Big[\sqrt{\mu_n\big(\lfloor y \rfloor+\e_2+\e_1\big)}-\sqrt{\mu_n\big(\lfloor y \rfloor+{\e}_2}\Big]    \,(y_1-\lfloor y_1\rfloor).
\end{aligned}
$$
This is equal to zero, since $y_1-\lfloor y_1\rfloor=y_2-\lfloor y_2\rfloor$. Hence, we know that $g_n$ is well-defined. Furthermore, this also shows that $g_n$ is continuous within each $C(k)$. From now on, we abbreviate $f_{n,\sigma(y),i}(y)$ by $f_{n,\sigma,i}(y)$.

Similarly, we see that $g_n$ is also continuous at the boundary of each of the cubes. Indeed, a site $y\in\R^d$ belongs to this boundary if and only if it has at least one integer coordinate. For the sake of simplicity assume that only for $i=1$ it holds that $y_i-\lfloor y_i\rfloor =1$. It is clear that $y\in T_{\sigma_1}(k)\cap \overline{T_{\sigma_2}(k+\e_1)}$ where $\sigma_1,\sigma_2\in \Sym_d$ are given by
$$
\sigma_1(1)=1, \sigma_2(d)=1 \textrm{ and } \sigma_1(i+1)=\sigma_2(i)\qquad\forall i\in\{1,\ldots,d-1\}
$$
Choose an arbitrary sequence $(y^{\ssup m})_m\in T_{\sigma_2}(k+\e_1)$ that converges to $y$. For sufficiently large $m$ it holds that $\lfloor y\rfloor+\e_1 =\lfloor y^{\ssup m}\rfloor$. We see now that:
$$
\begin{aligned}
\mu_n&(\lfloor y^{\ssup m}\rfloor)^{1/2}+\sum_{i=1}^d f_{n,\sigma_2,i}(y^{\ssup m})
=\mu_n(\lfloor y\rfloor+\e_1)^{1/2}+\sum_{i=1}^{d}(y^{\ssup m}_{\sigma_2(i)}-\lfloor y^{\ssup m}_{\sigma_2(i)}\rfloor) \\
&\quad\times\Big[\sqrt{\mu_n\big(\lfloor y \rfloor+\e_1+{\e}_{\sigma_2(1)}+\dots+{\e}_{\sigma_2(i)}\big)}-\sqrt{\mu_n\big(\lfloor y \rfloor+\e_1+{\e}_{\sigma_2(1)}+\dots+{\e}_{\sigma_2(i-1)}\big)}\,\Big].
\end{aligned}
$$
Note that the summand for $i=d$ converges to zero, since $\lim_{m\to\infty}(y^{\ssup m}_{\sigma_2(d)}-\lfloor y^{\ssup m}_{\sigma_2(d)}\rfloor)=0$. In the remaining sum on $i=1,\dots,d-1$, we shift the sum by substituting $i=j-1$ and replace $\sigma_2(j-1)$ by $\sigma_1(j)$, to get, as $m\to\infty$,
$$
\begin{aligned}
\mu_n&(\lfloor y^{\ssup m}\rfloor)^{1/2}+\sum_{i=1}^d f_{n,\sigma_2,i}(y^{\ssup m})\\
&=\mu_n(\lfloor y\rfloor )^{1/2}+\Big[\sqrt{\mu_n(\lfloor y\rfloor+\e_1)}-\sqrt{\mu(\lfloor y\rfloor)}\Big](y^{\ssup m}_{\sigma_1(1)}-\lfloor y_{\sigma_1(1)}\rfloor)\\
&\quad+\sum_{j=2}^{d} \Big[\sqrt{\mu_n\big(\lfloor y \rfloor+\e_{\sigma_1(1)}+{\e}_{\sigma_1(2)}+\dots+{\e}_{\sigma_1(j)}\big)}-\sqrt{\mu_n\big(\lfloor y \rfloor+\e_{\sigma_1(1)}+{\e}_{\sigma_1(2)}+\dots+{\e}_{\sigma_1(j-1)}\big)}\Big]\\
&\qquad\qquad\qquad\times(y^{\ssup m}_{\sigma_1(j)}-\lfloor y^{\ssup m}_{\sigma_1(j)}\rfloor)+o(1).
\end{aligned}
$$
As $y^{\ssup m}_{j}\to y_j$, we see that the right-hand side converges towards $\mu_n(\lfloor y\rfloor)^{1/2}+\sum_{j=1}^d f_{n,\sigma_1,j}(y)$. Hence, we proved the continuity of $g_n$ at the border of each $C(k)$ and thus the continuity of $g_n$ on the entire box $B_{R_n}=[-R_n,R_n]^d$. In addition, as $g_n$ is clearly differentiable in the interior of each tetrahedron $T_\sigma(k)$, we get that $g_n$ lies in $H^1(B_{R_n})$. An elementary calculation shows that 
\begin{equation}\label{gnprop1}
\norm \nabla_{\!\!\scriptscriptstyle{R_n}} g_n\norm _{2, R_n}^2=\alpha_{t_n}^2\big\|\left(-A_{R_n\alpha_{t_n}}\right)^{1/2}\sqrt{\mu_n }\big\|_{2,R_n\alpha_{t_n}}^2,\qquad n\in\N,
\end{equation}
where $\norm\cdot\norm_{p,R}$ denotes the $p$-norm of functions $B_R\to \R$, and $\nabla_{\!\!\scriptscriptstyle{R_n}} g_n$ denotes the gradient of $g_n$ in the box $B_{R_n}$ with periodic boundary condition. Furthermore, we will prove at the end of the proof of this lemma that, for any $n\in\N$,
\begin{equation}\label{gnprop2}
\begin{aligned}
&\alpha_{t_n}^{d(p-1)/p}\|\mu_n\|_{p,R_n\alpha_{t_n}}\\
&\quad\leq \norm g_n^2\norm _{p, R_n}\big[1+C\alpha_{t_n}^{[d(p-1)-2]/2p}\big]+\big[\norm \nabla_{\!\!\scriptscriptstyle{R_n}} g_n\norm _{2, R_n}^2+1\big]C\big[\alpha_{t_n}^{[d(p-1)-2]/2p}+\alpha_{t_n}^{[d(p-1)-2]/p}\big],
\end{aligned}
\end{equation}
where $C$ depends on $d$ and $p$ only. Our assumption $d(p-1)<2$ implies that the exponents at $\alpha_{t_n}$ on the right-hand side are negative. Recalling that $\alpha_{t_n}$ tends to infinity as $t_n\to\infty$ we have (at the cost of choosing a subsequence of $(R_n,t_n,\mu_n)_n$): 
\begin{equation}\label{gnprop3}
\alpha_{t_n}^{d(p-1)/p}\|\mu_n\|_{p,R_n\alpha_{t_n}}\leq \norm g_n^2\norm _{p, R_n}(1+\smfrac 1n)+\smfrac 1n\big(\norm \nabla_{\!\!\scriptscriptstyle{R_n}} g_n\norm _{2, R_n}^2+1\big).
\end{equation}

Note that $g_n$ asymptotically satisfies periodic boundary condition in the box $[-R_n,R_n]^d$. Now we compare it to some version that satisfies zero boundary condition. To this end, pick some $\eps\in(0,1)$ and introduce $\Psi_{R_n}=\bigotimes_{i=1}^d \psi_{R_n}\colon \R^d\to[0,1]$, where $\psi_{R_n}\colon\R\to[0,1]$ is zero outside $[-R_n,R_n]$, one in $[-R_n+R_n^\eps,R_n-R_n^\eps]$ and linearly interpolates between $-R_n$ and $-R_n+R_n^\eps$ and between $R_n-R_n^\eps$ and $R_n$. We are going to estimate the changes of the functionals when going from $g_n$ to $g_n\Psi_{R_n}$. We first use the triangle inequality in \eqref{gndef} and note that the $L^2$-norm of $x\mapsto \alpha_{t_n}^{d/2}\sum_{i=1}^d f_{n,\sigma,i}(\alpha_{t_n}x)$ is not larger than $C\alpha_{t_n}^{-1}\norm \nabla_{\!\!\scriptscriptstyle{R_n}} g_n\norm _{2, R_n}$, where $C\in(0,\infty)$ depends on $d$ only, to see that
$$
\begin{aligned}
\norm g_n\norm _{2, R_n}&\leq \Big|\!\Big|\!\Big| \sqrt{\alpha_{t_n}^d\mu_n\big(\lfloor \alpha_{t_n}\cdot\rfloor\big)}\Big|\!\Big|\!\Big|_{2, R_n} +\Big|\!\Big|\!\Big| \alpha_{t_n}^{d/2}\sum_{i=1}^d f_{n,\sigma,i}(\alpha_{t_n}\,\cdot)\Big|\!\Big|\!\Big| _{2, R_n}\leq 1+C\alpha_{t_n}^{-1}\norm \nabla_{\!\!\scriptscriptstyle{R_n}} g_n\norm _{2, R_n}.
\end{aligned}
$$
Analogously we derive $\norm g_n\norm _{2, R_n}\geq 1-C\alpha_{t_n}^{-1}\norm \nabla_{\!\!\scriptscriptstyle{R_n}} g_n\norm _{2, R_n}$ and thus we get
\begin{equation}\label{gnprop4}
\big|\norm g_n\norm _{2, R_n}-1\big|\leq C\alpha_{t_n}^{-1}\norm \nabla_{\!\!\scriptscriptstyle{R_n}} g_n\norm _{2, R_n},\qquad n\in\N.
\end{equation}
Using Schwarz's inequality and using $n$ so large that $dR_n^{-\eps}\leq 1$, this allows us to show that
$$
\begin{aligned}
\norm \nabla (g_n\Psi_{R_n})\norm _2^2
&\leq\int_{\R^d}\sum_{i=1}^d\big|\partial_i g_n(x)\Psi_{R_n}(x)+g_n(x)\partial_i\Psi_{R_n}(x)\big|^2 \d x \\
&\leq \norm \nabla_{\!\!\scriptscriptstyle{R_n}} g_n\norm ^2_{2, R_n}+2R_n^{-\eps}\norm g_n\norm _{2, R_n}\norm \nabla_{\!\!\scriptscriptstyle{R_n}} g_n\norm _{2, R_n}+dR_n^{-2\eps}\norm g_n\norm _{2, R_n}^2\\
&\leq \norm \nabla_{\!\!\scriptscriptstyle{R_n}} g_n\norm ^2_{2, R_n}+2R_n^{-\eps}\smfrac 12\big(\norm g_n\norm _{2, R_n}^2+\norm \nabla_{\!\!\scriptscriptstyle{R_n}} g_n\norm _{2, R_n}^2\big)+dR_n^{-2\eps}\norm g_n\norm _{2, R_n}^2\\
&\leq \norm \nabla_{\!\!\scriptscriptstyle{R_n}} g_n\norm ^2_{2, R_n}\big[1+R_n^{-\eps}\big]+2R_n^{-\eps}\norm g_n\norm _{2, R_n}^2\\
&\leq \norm \nabla_{\!\!\scriptscriptstyle{R_n}} g_n\norm ^2_{2, R_n}\big[1+R_n^{-\eps}\big]+2R_n^{-\eps}\big(1+2C\alpha_{t_n}^{-1}\norm \nabla_{\!\!\scriptscriptstyle{R_n}} g_n\norm _{2, R_n}+C^2\alpha_{t_n}^{-2}\norm \nabla_{\!\!\scriptscriptstyle{R_n}} g_n\norm _{2, R_n}^2 \big).
\end{aligned}
$$
Hence, we have (probably by choosing again a subsequence of $(R_n,t_n,\mu_n)_n$):
\begin{equation}\label{gnprop6}
\norm \nabla_{\!\!\scriptscriptstyle{R_n}} g_n\norm _{2, R_n}^2\geq \norm \nabla (g_n\Psi_{R_n})\norm _2^2(1-\smfrac 1n)-\smfrac 1n,\qquad n\in\N.
\end{equation}
Furthermore, we note that, without loss of generality, we may assume that
\begin{equation}\label{gnprop7}
\int_{B_{R_n}\setminus B_{R_n-R_n^\eps}} g_n^{2p}(x)\,\d x \leq \frac{|B_{R_n}\setminus B_{R_n-R_n^\eps}|}{|B_{R_n}|}\int_{B_{R_n}} g_n^{2p}(x)\,\d x,\qquad n\in\N.
\end{equation}
This can be easily derived using the shift invariance of the second integral due to periodic boundary conditions. To see this, assume that for every shift $\theta_z(x)=x+z$ modulo $R_n$ with $z\in B_{R_n}$ it holds that:
\begin{equation}\label{shift}
\int_{B_{R_n}\setminus B_{R_n-R_n^\eps}} g_n^{2p}(\theta_z(x))\,\d x > \frac{|B_{R_n}\setminus B_{R_n-R_n^\eps}|}{|B_{R_n}|}\int_{B_{R_n}} g_n^{2p}(x)\,\d x.
\end{equation}
Now, integrate both sides over all $z\in B_{R_n}$, to get a contradiction by changing the order of the integration. Hence, for some $z\in B_{R_n}$, the opposite of \eqref{shift} holds, and we continue to work with $g_n\circ \theta_z$ instead of $g_n$. All properties considered so far are preserved by periodicity.

Note that the quotient on the right-hand side of \eqref{gnprop7} can be estimated against $CR_n^{\eps-1}$ where $C$ does not depend on $n$. Thus, we have:
$$
\norm g_n^2\norm _{p, R_n}^p-\norm (g_n\Psi_{R_n})^2\norm _p^p\leq CR_n^{\eps-1}\norm g_n^2\norm _{p, R_n}^p
$$
which leads (after probably choosing again a subsequence of $(R_n,t_n,\mu_n)_n$) to
\begin{equation}\label{gnprop8}
\norm g_n^2\norm _{p, R_n}\leq \norm (g_n\Psi_{R_n})^2\norm _p(1+\smfrac 1n),\qquad n\in\N.
\end{equation}
Summarizing, substituting \eqref{gnprop1} and \eqref{gnprop3}, and using \eqref{gnprop6} and \eqref{gnprop8}, for any $n$ we have
\begin{equation}\label{mainstep1}
\begin{aligned}
\mbox{R.h.s.~of \eqref{Rntnmun}}
&\leq \norm g_n^2\norm _{p, R_n}(1+\smfrac 1n)-\norm \nabla_{\!\!\scriptscriptstyle{R_n}} g_n\norm _{2, R_n}^2(1-\smfrac 2n)-\smfrac 1n \norm \nabla_{\!\!\scriptscriptstyle{R_n}} g_n\norm _{2, R_n}^2+\smfrac 2n\\
&\leq \norm (g_n\Psi_{R_n})^2\norm _p(1+\smfrac 3n)-\norm \nabla(g_n\Psi_{R_n})\norm _2^2(1-\smfrac 3n)-\smfrac 1n \norm \nabla_{\!\!\scriptscriptstyle{R_n}} g_n\norm _{2, R_n}^2+\smfrac 3n\\
&=\Big[\frac{\norm (g_n\Psi_{R_n})^2\norm _p}{\norm g_n\Psi_{R_n}\norm _2^2}\,\frac{1+\smfrac 3n}{1-\smfrac 3n}-\frac{\norm \nabla(g_n\Psi_{R_n})\norm _2^2}{\norm g_n\Psi_{R_n}\norm _2^2}\Big](1-\smfrac 3n)\norm g_n\Psi_{R_n}\norm _2^2-\smfrac 1n \norm \nabla_{\!\!\scriptscriptstyle{R_n}} g_n\norm _{2, R_n}^2+\smfrac 3n.
\end{aligned}
\end{equation}
Now observe that $h_n=g_n\Psi_{R_n}/\norm g_n\Psi_{R_n}\norm _2$ is an $L^2$-normalized element of $H^1(\R^d)$ and of $L^{2p}(\R^d)$. Hence, we may estimate the term in the brackets against the supremum over all such functions, which is equal to $\rho_{d,p}^{\ssup {\rm c}}(\frac{1+3/n}{1-3/n})$, see \eqref{rhocdef}. Since $\rho_{d,p}^{\ssup {\rm c}}(\frac{1+3/n}{1-3/n})>0$ by \eqref{rhoident} and, obviously, $\norm g_n\Psi_{R_n}\norm _2^2\leq \norm g_n\norm _{2, R_n}^2$, we can proceed with
$$
\begin{aligned}
\mbox{R.h.s.~of \eqref{Rntnmun}}
&\leq \rho_{d,p}^{\ssup {\rm c}}\big(\smfrac{1+3/n}{1-3/n}\big)(1-\smfrac 3n)\norm g_n\norm _{2, R_n}^2-\smfrac 1n \norm \nabla_{\!\!\scriptscriptstyle{R_n}} g_n\norm _{2, R_n}^2+\smfrac 3n\\
&\leq \rho_{d,p}^{\ssup {\rm c}}\big(\smfrac{1+3/n}{1-3/n}\big)\norm g_n\norm _{2, R_n}^2-\smfrac 1n \norm \nabla_{\!\!\scriptscriptstyle{R_n}} g_n\norm _{2, R_n}^2+\smfrac 3n.
\end{aligned}
$$
By \eqref{gnprop4} and at the cost of chosing again a subsequence of $(R_n,t_n,\mu_n)_n$, we have that $\norm g_n\norm _{2, R_n}^2\leq 1+\smfrac 1n\norm \nabla_{\!\!\scriptscriptstyle{R_n}} g_n\norm _{2, R_n}^2/\rho_{d,p}^{\ssup {\rm c}}(\smfrac{1+3/n}{1-3/n})$. Using this in the last display, we arrive for all $n$, at
$$
\mbox{R.h.s.~of \eqref{Rntnmun}}\leq \rho_{d,p}^{\ssup {\rm c}}\big(\smfrac{1+3/n}{1-3/n}\big)+\smfrac 3n.
$$
Recalling \eqref{rhoident}, we see that the right-hand side converges to $\rho_{d,p}^{\ssup {\rm c}}(1)$ as $n\uparrow\infty$. This ends the proof of the lemma.

Now we give the proof of \eqref{gnprop2}. Recall \eqref{fndef} and \eqref{gndef} and that we write $f_{n,\sigma,i}(y)$ instead of $f_{n,\sigma(y),i}(y)$. The triangle inequality gives that
\begin{equation}\label{2proof1}
\begin{aligned}
\norm g_n\norm _{2p, R_n}&\geq \alpha_{t_n}^{d/2}\norm \mu_n(\lfloor \alpha_{t_n} \,\cdot\,\rfloor)^{1/2} \norm _{2p}-\alpha_{t_n}^{d/2}\Big|\!\Big|\!\Big| \sum_{i=1}^d f_{n,\sigma,i}(\alpha_{t_n}\,\cdot\, )\Big|\!\Big|\!\Big| _{2p, R_n}\\
&=\alpha_{t_n}^{d(p-1)/2p}\|\mu_n\|_p^{\frac 12}
-\alpha_{t_n}^{d(p-1)/2p}\Big|\!\Big|\!\Big| \sum_{i=1}^d f_{n,\sigma,i}\Big|\!\Big|\!\Big| _{2p, R_n}.
\end{aligned}
\end{equation}
Now we estimate, for any $y\in\R^d$,
\begin{equation}\label{pnorm2norm}
\begin{aligned}
\Big|\sum_{i=1}^d f_{n,\sigma,i}(y)\Big|^{2p}
&\leq d^{2p}\sum_{i=1}^d \Big|\sqrt{\mu_n\big(\lfloor y \rfloor+{\e}_{\sigma(1)}+\dots+{\e}_{\sigma(i)}\big)}-\sqrt{\mu_n\big(\lfloor y \rfloor+{\e}_{\sigma(1)}+\dots+{\e}_{\sigma(i-1)}\big)}\Big|^{2p}\\
&\leq d^{2p}\sum_{i=1}^d\Big|\sqrt{\mu_n\big(\lfloor y \rfloor+{\e}_{\sigma(1)}+\dots+{\e}_{\sigma(i)}\big)}-\sqrt{\mu_n\big(\lfloor y \rfloor+{\e}_{\sigma(1)}+\dots+{\e}_{\sigma(i-1)}\big)}\Big|^{2}\\
&=d^{2p}\alpha_{t_n}^{-d-2}|\nabla_{\!\!{\scriptscriptstyle{R_n}}} g_n(y/\alpha_{t_n})|^2,
\end{aligned}
\end{equation}
since the term in brackets is not larger than one (recall that $\mu_n$ is a probability measure on a finite set). Using this in \eqref{2proof1}, we obtain
$$
\alpha_{t_n}^{d(p-1)/2p}\|\mu_n\|_p^{\frac 12}\leq \norm g_n\norm _{2p, R_n}+d\alpha_{t_n}^{[d(p-1)-2]/2p}\norm \nabla_{\!\!\scriptscriptstyle{R_n}} g_n\norm _{2, R_n}^{\frac 1p}.
$$
Now square both sides and use the estimates 
$$
\norm \nabla_{\!\!\scriptscriptstyle{R_n}} g_n\norm _{2, R_n}^{\frac 1p}\leq 1+\norm \nabla_{\!\!\scriptscriptstyle{R_n}} g_n\norm _{2, R_n} \qquad\mbox{and}\qquad \norm g_n\norm _{2p, R_n}\norm \nabla_{\!\!\scriptscriptstyle{R_n}} g_n\norm _{2, R_n}\leq \frac 12\big(\norm g_n\norm _{2p, R_n}^2+\norm \nabla_{\!\!\scriptscriptstyle{R_n}} g_n\norm _{2, R_n}^2\big)
$$
and summarize to arrive at \eqref{gnprop2}.

\end{proof}

\subsection*{Acknowledgement.} We are thankful to an anonymous referee who carefully read the earlier draft and gave many helpful comments.

\end{document}